\renewcommand*{\eqref}[1]{\hyperref[{#1}]{\textup{\tagform@{\ref*{#1}}}}}
\definecolor{kerstin}{RGB}{0,150,75}
\definecolor{tb}{RGB}{255,102,0}
\newcommand\centre[4][below]{\node (#3) at #2 [circle,minimum size=0.5em,inner sep=0pt,thin,fill,solid] {}; \node [#1=0.1em] at (#3) {#4};}
\newcounter{cst}
\newcommand{\re}{\mathbb{R}}
\newcommand{\R}{\mathbb{R}}
\newcommand{\diver}{\operatorname{div}}
\newcommand{\Tau}{\mathcal{T}}
\newcommand{\dkl}{d_{K|L}}
\newcommand{\edgesint}{\mathcal{E}_{\operatorname{int}}}
\newcommand{\edgeskint}{\mathcal{E}_{K,\operatorname{int}}}
\newcommand{\edgeskiint}{\mathcal{E}_{K_i,\operatorname{int}}}
\newcommand{\tn}{t_n}
\newcommand{\tnp}{t_{n+1}}
\newcommand{\dlt}{\Delta t}
\newcommand{\unpk}{u^{n+1}_K}
\newcommand{\ve}{\mathbf{v}}
\newcommand{\ms}{m_{\sigma}}
\newcommand{\vksnp}{v^{n+1}_{K,\sigma}}
\newtheorem{definition}{Definition}[section]
\newtheorem{proposition}[definition]{Proposition}
\theoremstyle{remark}
\newtheorem{remark}[definition]{Remark}
\title{Finite Volume Approximations for Non-Linear Parabolic Problems with Stochastic Forcing}
\author{Caroline Bauzet\footnotemark[1] \and Flore Nabet\footnotemark[2] \and Kerstin Schmitz\footnotemark[3] \and Aleksandra Zimmermann\footnotemark[4]}
\begin{document}

\maketitle

\begin{abstract}
We propose a two-point flux approximation finite-volume scheme for a stochastic non-linear parabolic equation with a multiplicative noise.
The time discretization is implicit except for the stochastic noise term in order to be compatible with stochastic integration in the sense of It\^{o}.
We show existence and uniqueness of solutions to the scheme and the appropriate measurability for stochastic integration follows from the uniqueness of approximate solutions.\\

\textbf{Keywords:} stochastic non-linear parabolic equation, multiplicative Lipschitz noise, finite-volume method, upwind scheme, diffusion-convection equation, variational approach\\

\textbf{Mathematics Subject Classification (2020):} 65M08, 60H15, 35K55
\end{abstract}

\footnotetext[1]{Aix Marseille Univ, CNRS, Centrale Marseille, LMA UMR 7031, Marseille, France, caroline.bauzet@univ-amu.fr}
\footnotetext[2]{CMAP, CNRS, \'Ecole polytechnique, Institut Polytechnique de Paris, 91120 Palaiseau, France, flore.nabet@polytechnique.edu}
\footnotetext[3]{Universit\"at Duisburg-Essen, Fakult\"at f\"ur Mathematik, Essen, Germany, kerstin.schmitz@uni-due.de}
\footnotetext[4]{TU Clausthal, Institut f\"ur Mathematik, Clausthal-Zellerfeld, Germany, aleksandra.zimmermann@tu-clausthal.de}

\section{Introduction}
Let $\Lambda$ be a bounded, open, connected and  polygonal set of $\R^2$.
Moreover let $(\Omega,\mathcal{A},\mathds{P})$ be a probability space endowed with a right-continuous, complete filtration $(\mathcal{F}_t)_{t\geq 0}$ and let $(W(t))_{t\geq 0}$ be a standard, one-dimensional Brownian motion with respect to $(\mathcal{F}_t)_{t\geq 0}$ on $(\Omega,\mathcal{A},\mathds{P})$.\\
For $T>0$, we consider the following  non-linear parabolic problem forced by  a multiplicative stochastic noise:
\begin{align}\label{equation}
\begin{aligned}
du-\Delta u\,dt + \text{div} \big(\mathbf{v}f(u)\big)\, dt &=g(u)\,dW(t)+\beta(u)\,dt, &&\text{in }\Omega\times(0,T)\times\Lambda;\\
u(0,.)&=u_0, &&\text{in } \Omega\times\Lambda;\\
\nabla u\cdot \mathbf{n}&=0, &&\text{on }\Omega\times(0,T)\times\partial\Lambda;
\end{aligned}
\end{align}
where $\diver$ is the divergence operator with respect to the space variable and $\mathbf{n}$ denotes the unit normal vector to $\partial\Lambda$ outward to $\Lambda$.
After setting $L_f, L_\beta$ and $L_g$ in $\R_+^*$, we assume the following hypotheses on the data:
\begin{itemize}
\item[$H_1$:] $u_0\in L^2(\Omega;H^1(\Lambda))$ is $\mathcal{F}_0$-measurable.
\item[$H_2$:] $f:\R\rightarrow \R$ is nondecreasing, $L_f$-Lipschitz continuous with $f(0)=0$.
\item[$H_3$:] $g:\re\rightarrow\re$ is a $L_g$-Lipschitz continuous function.
 \item[$H_4$:] $\beta : \R\rightarrow \R$  is  $L_\beta$-Lipschitz continuous with $\beta(0)=0$.
\item[$H_5$:] $\mathbf{v}\in \mathscr{C}^1([0,T]\times \Lambda; \R^2)$ such that $\text{div}(\mathbf{v})=0$ in $[0,T]\times \Lambda$ and $\mathbf{v}\cdot\mathbf{n}=0$ on $[0,T]\times \partial\Lambda$.
\end{itemize}

\subsection{Concept of Solution and Main Result}

We will be interested in the concept of solution as defined below, which we will call a variational solution: 
\begin{definition}\label{solution} A predictable stochastic  process $u$ is a variational solution to Problem \eqref{equation} if it belongs to 
\begin{align*}
 L^2(\Omega;\mathscr{C}([0,T];L^2(\Lambda)))\cap L^2(\Omega;L^2(0,T;H^1(\Lambda)))
\end{align*}
and satisfies, for all $t\in[0,T]$, in $L^2(\Lambda)$, and $\mathds{P}$-a.s. in $\Omega$
\begin{align*}
&u(t)-u_0-\int_0^t \Delta u(s)\,ds+\int_0^t \diver\big(\mathbf{v}(s,.)f(u(s))\big)\, ds\\
&=\int_0^t g(u(s))\,dW(s)+\int_0^t \beta(u(s))\,ds.
\end{align*}
\end{definition}

Existence, uniqueness and regularity of this variational solution is well-known in the literature, see, e.g., \cite{KryRoz81}.

\subsection{Outline}
In this contribution, we propose a finite-volume approximation scheme for the solution of \eqref{equation} in the sense of Definition \ref{solution}. We show existence and uniqueness of solutions to the scheme. In Section \ref{sectiontwo}, we introduce the notation for our finite-volume framework. In Section \ref{FVscheme}, we introduce our finite-volume scheme. The main result is contained in Section \ref{mainres}.

\section{Admissible Finite-Volume Meshes and Notations}\label{sectiontwo}

In order to perform a finite-volume approximation of the variational solution of Problem \eqref{equation} on $[0,T]\times \Lambda$ we need first of all to set a choice for the temporal and spatial discretization. For the time-discretization, let $N\in \mathbb{N}^*$ be given. We define the  fixed time step $\Delta t=\frac{T}{N}$ and  divide the interval $[0,T]$ in $0=t_0<t_1<...<t_N=T$ equidistantly with $\tn=n \Delta t$ for all $n\in \{0, ..., N\}$.
For the space discretization, we refer to \cite{gal} and consider finite-volume admissible meshes in the sense of the following definition.
\begin{definition}[Admissible finite-volume mesh]\label{defmesh}
An admissible finite-volume mesh $\mathcal{T}$ of $\Lambda$ (see Fig.~\ref{fig:notation_mesh}) is given by a family of open polygonal and convex subsets $K$, called \textit{control volumes} of $\Tau$, satisfying the following properties:
\begin{itemize}
\item $\overline{\Lambda}=\bigcup_{K\in\Tau}\overline{K}$.
\item If $K,L\in\Tau$ with  $K\neq L$ then $\operatorname{int}K\cap\operatorname{int}L=\emptyset$.
\item If $K,L\in\Tau$, with $K\neq L$ then either the $1$-dimensional  Lebesgue measure of $\overline{K}\cap \overline{L}$ is $0$ or $\overline{K}\cap \overline{L}$ is the edge, denoted by $\sigma=K|L$, separating the control volumes $K$ and $L$.
\item To each control volume $K\in\Tau$, we associate a point $x_K\in \overline{K}$ (called the center of $K$) such that: If $K,L\in\Tau$ are two neighbouring control volumes the straight line between the centers $x_K$ and $x_L$ is orthogonal to the edge $\sigma=K|L$.
\end{itemize}
\end{definition}

\begin{figure}[htbp!]
\centering
\begin{tikzpicture}[scale=1.8]

  \clip (-1.2,-0.6) rectangle (1.8,1.3);

  \node[rectangle,fill] (A) at (-1,0.6) {};
  \node[rectangle,fill] (B) at (0,1.2) {};
  \node[rectangle,fill] (C) at (0,-0.2) {};
  \node[rectangle,fill] (D) at (1.5,0.3) {};

  \centre[above right]{(-0.6,0.5)}{xK}{$x_K$};
  \centre[above left]{(0.9,0.5)}{xL}{$x_L$};
  
  \draw[thick] (B)--(C) node [pos=0.7,right] {$\sigma=$\small{$K|L$}};

  \draw[thin,opacity=0.5] (A) -- (B) -- (C) -- (A) ;
  \draw[thin,opacity=0.5] (D) -- (B) -- (C) -- (D);

  \draw[dashed] (xK) -- (xL);
  
  \coordinate (KK) at ($(xK)!0.65!-90:(xL)$);
  \coordinate (LL) at ($(xL)!0.65!90:(xK)$);

  \draw[dotted,thin] (xK) -- (KK);
  \draw[dotted,thin] (xL) -- (LL);

  \draw[|<->|] (KK) -- (LL) node [midway,fill=white,sloped] {$\dkl$};
 
  \coordinate (KAB) at ($(A)!(xK)!(B)$);
  \coordinate (KAC) at ($(A)!(xK)!(C)$);

  \coordinate (LDB) at ($(D)!(xL)!(B)$);
  \coordinate (LDC) at ($(D)!(xL)!(C)$);

  \draw[dashed] (xK) -- ($(xK)!3!(KAB)$);
  \draw[dashed] (xK) -- ($(xK)!3!(KAC)$);

  \draw[dashed] (xL) -- ($(xL)!3!(LDB)$);
  \draw[dashed] (xL) -- ($(xL)!3!(LDC)$);

  \begin{scope}[on background layer]   
    \draw (0,0.5) rectangle ++ (0.1,-0.1);
  \end{scope}
  
  \coordinate (nkl) at ($(B)!0.3!(C)$);
  \draw[->,>=latex] (nkl) -- ($(nkl)!0.3cm!90:(C)$) node[below] {$\mathbf{n}_{K,\sigma}$};

\end{tikzpicture}
\caption{Notations of the mesh $\mathcal T$ associated with $\Lambda$\label{fig:notation_mesh}}
\end{figure}
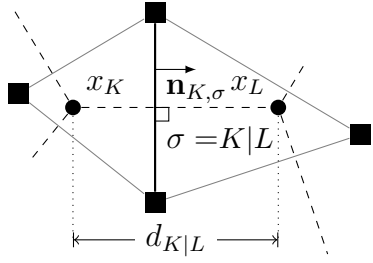
Once an admissible finite-volume mesh $\Tau$ of $\Lambda$ is fixed, we will use the following notations.
\subsection{Notation}
\begin{itemize}
\item $h=\operatorname{size}(\Tau)=\sup\{\operatorname{diam}(K): K\in\Tau\}$, the mesh size.
\item $d_h\in\mathbb{N}$ the number of control volumes $K\in\Tau$ with $h=\operatorname{size}(\Tau)$.
\item $\edgesint:=\{\sigma:\sigma\nsubseteq \partial\Lambda\}$ is the set of interior edges of the mesh $\Tau$.
\item For $K\in\Tau$, $\mathcal{E}_K$ is the set of edges of $K$, $\mathcal{E}_{K,\operatorname{int}}=\mathcal{E}_K\cap\edgesint$ and $m_K$ is the Lebesgue measure of $K$.
\item For $K\in\Tau$ and $\sigma \in \mathcal{E}_K$, $\mathbf{n}_{K,\sigma}$ is the unit normal vector to $\sigma$ outward to $K$.
\item Let $K,L\in\Tau$ be two neighbouring control volumes. For $\sigma=K|L\in\edgesint$, let $m_\sigma$ be the length of $\sigma$ and $d_{K|L}$ the distance between $x_K$ and $x_L$.
\item For any vector  $u_h= (u_K)_{K\in\Tau} \in \re^{d_h}$, we define the $L^2$-norm on $\Lambda$ by
$$
\|u_h\|_{L^2(\Lambda)} = \left(\sum_{K\in\Tau} m_K |u_K|^2 \right)^\frac12.
$$
\end{itemize}
In the sequel, we note $|x|$ the euclidian norm of $x \in \R^d$ with $d\geq 1$.

\section{The Finite-Volume Scheme}\label{FVscheme}

Firstly, we define the vector  $u_h^0= (u^0_K)_{K\in\Tau} \in \re^{d_h}$ by the discretization of the initial condition $u_0$ of Problem \eqref{equation} over each control volume:
\begin{align}
\label{eq:def_u0}
u_K^0:=\frac{1}{m_K}\int_K u_0(x)\,dx, \quad \forall K\in \Tau.
\end{align}
The finite-volume scheme we propose reads, for this given initial $\mathcal{F}_0$-measurable random vector $u_h^0\in\re^{d_h}$: \\
For any $n \in \{0,\dots,N-1\}$, knowing $u_h^n= (u^{n}_K)_{K\in\Tau} \in \re^{d_h}$ we search for $u_h^{n+1}=(u^{n+1}_K)_{K\in\Tau}\in\re^{d_h}$ such that, for almost every $\omega\in\Omega$, the vector $u_h^{n+1}$ is solution to the following random equations 
\begin{align}\label{equationapprox}
\begin{split}
&\frac{m_K}{\Delta t}(u_K^{n+1}-u_K^n)+\sum_{\sigma=K|L\in\edgeskint} m_\sigma \vksnp f(u^{n+1}_{\sigma}) \\
&+\sum_{\sigma=K|L\in\edgeskint}\frac{m_\sigma}{\dkl}(u_K^{n+1}-u_L^{n+1})\\
&=\frac{m_K}{\Delta t}g(u_K^n)(W^{n+1}-W^n)+m_K \beta(\unpk),\quad \forall K\in \Tau,
\end{split}
\end{align}
where, by denoting $\gamma$ the $(d-1)$-dimensional Lebesgue measure,
$$\vksnp=\frac{1}{\dlt \ms}\int_{\tn}^{\tnp}\int_{\sigma}\ve(t,x)\cdot\mathbf{ n}_{K,\sigma}\, d\gamma(x)dt,$$ 
and $u^{n+1}_{\sigma}$ denotes the upstream value at time $t_{n+1}$ with respect to $\sigma$ defined as follows: If $\sigma=K|L\in\edgeskint$ is the interface between the control volumes $K$ and $L$, $u^{n+1}_{\sigma}$ is equal to $u^{n+1}_K$ if $\vksnp\geqslant 0$ and to $u^{n+1}_L$ if $\vksnp< 0$.
Note also that $W^{n+1}-W^n=W(t_{n+1})-W(t_n)$ for $n\in\{0,\dots,N-1\}$. 

\begin{remark} Since $\text{div}(\mathbf{v})=0$ in $[0,T]\times \Lambda$, for any $n \in \{0,\cdots,N-1\}$ and $K\in\Tau$ one has $ \sum_{\sigma=K|L\in\edgeskint} m_\sigma \vksnp = 0$. Thus, using that $\vksnp=(\vksnp)^+-(\vksnp)^-$ (where, for $r\in\re$, $r^+:=\max\{r,0\}$ and  $r^-:=-\min\{0,r\}$) an equivalent formulation of the scheme \eqref{equationapprox} is given by
\begin{align}\label{equationapproxter}
\begin{split}
&\frac{m_K}{\Delta t}(u_K^{n+1}-u_K^n)+\sum_{\sigma=K|L\in\edgeskint} m_\sigma (\vksnp)^-\Big(f(u^{n+1}_{K})-f(u^{n+1}_L)\Big)\\
&+\sum_{\sigma=K|L\in\edgeskint}\frac{m_\sigma}{\dkl}(u_K^{n+1}-u_L^{n+1})\\
&=\frac{m_K}{\Delta t}g(u_K^n)\left(W^{n+1}-W^n\right)+m_K \beta(\unpk), \quad \forall K \in \Tau.
\end{split}
\end{align}
\end{remark}

\section{Main Result}\label{mainres}
\begin{proposition}[Existence of a discrete solution]
\label{210609_prop1}
Assume that hypotheses $H_1$ to $H_5$ hold. Let $\Tau$ be an admissible finite-volume mesh of $\Lambda$  in the sense of Definition \ref{defmesh} with a mesh size $h$ and $N\in \mathbb{N}^*$. Then, there exists a unique solution $(u_h^n)_{1\le n \le N} \in (\re^{d_h})^N$ to Problem~\eqref{equationapprox} associated with the initial vector $u^0_h$ defined by~\eqref{eq:def_u0}. Additionally,  for any $n\in \{0,\ldots,N\}$, $u_h^n$ is a $\mathcal{F}_{t_n}$-measurable random vector.
\end{proposition}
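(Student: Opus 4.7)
The plan is to argue by induction on $n \in \{0, \ldots, N-1\}$. The base case is immediate: since $u_0 \in L^2(\Omega; H^1(\Lambda))$ is $\mathcal{F}_0$-measurable by $H_1$, each cell average in \eqref{eq:def_u0} is $\mathcal{F}_0$-measurable, and so is the vector $u_h^0$. For the inductive step, I assume that $u_h^n$ is $\mathcal{F}_{t_n}$-measurable. Since $u_K^n$ and $W^{n+1}-W^n$ are the only random quantities appearing in \eqref{equationapprox}, for $\mathds{P}$-a.e.\ $\omega\in\Omega$ the scheme reduces to a deterministic nonlinear algebraic system of the form $\Phi_\omega(U) = b(\omega)$ with $U = u_h^{n+1}(\omega)\in\R^{d_h}$, where the map $\Phi_\omega:\R^{d_h}\to\R^{d_h}$ is defined, on the basis of formulation \eqref{equationapproxter}, by
\begin{align*}
\Phi_\omega(U)_K &:= \frac{m_K}{\Delta t}\, U_K + \sum_{\sigma=K|L\in\edgeskint} m_\sigma\, (\vksnp)^{-}\, \bigl(f(U_K)-f(U_L)\bigr) \\
&\quad + \sum_{\sigma=K|L\in\edgeskint} \frac{m_\sigma}{\dkl}(U_K - U_L) - m_K\,\beta(U_K),
\end{align*}
which is continuous by $H_2$ and $H_4$, and $b(\omega)_K = \frac{m_K}{\Delta t}u_K^n(\omega) + \frac{m_K}{\Delta t}g(u_K^n(\omega))(W^{n+1}-W^n)(\omega)$ collects the explicit right-hand side.

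My first substantive step is to establish uniqueness of the solution to $\Phi_\omega(U) = b(\omega)$ via a monotonicity argument. Given two solutions $U, V$, I test $\Phi_\omega(U) - \Phi_\omega(V)$ against $U-V$ in the canonical inner product and re-sum by edges. The diffusion contribution produces the nonnegative quantity $\sum_{\sigma=K|L\in\edgesint}\frac{m_\sigma}{\dkl}\bigl((U_K-V_K)-(U_L-V_L)\bigr)^2$, while the upwind convection contribution is nonnegative because $f$ is nondecreasing ($H_2$) and $\mathbf{v}$ is divergence-free with $\mathbf{v}\cdot\mathbf{n}=0$ on $\partial\Lambda$ ($H_5$). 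The identity term gives $\frac{1}{\Delta t}\sum_K m_K (U_K-V_K)^2$, and the Lipschitz property of $\beta$ ($H_4$) controls the $\beta$-contribution from below by $-L_\beta\sum_K m_K(U_K-V_K)^2$. Under the smallness condition $\Delta t\, L_\beta < 1$, the combination of these estimates forces $U=V$.

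For existence, the same computation shows that $\Phi_\omega$ is continuous and strongly monotone, hence coercive, so the standard corollary of Brouwer's fixed-point theorem (``continuous coercive maps $\R^{d_h}\to\R^{d_h}$ are surjective'') provides a solution $U = u_h^{n+1}(\omega)$. For the $\mathcal{F}_{t_{n+1}}$-measurability, I observe that $(\omega, U)\mapsto \Phi_\omega(U)$ is jointly measurable (measurability in $\omega$ follows from the induction hypothesis on $u_h^n$ and the $\mathcal{F}_{t_{n+1}}$-measurability of $W^{n+1}-W^n$, while continuity in $U$ is clear) and that, for every $\omega$, $\Phi_\omega$ is a continuous bijection of $\R^{d_h}$ whose inverse depends continuously on the data thanks to strong monotonicity. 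A standard parameter-dependent selection argument then yields that $\omega\mapsto u_h^{n+1}(\omega) = \Phi_\omega^{-1}(b(\omega))$ is measurable with respect to $\sigma(u_h^n, W^{n+1}-W^n)\subset\mathcal{F}_{t_{n+1}}$, closing the induction.

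The main obstacle is the handling of the implicit $\beta$-term: since $\beta$ is merely Lipschitz and not monotone, its contribution to the coercivity/monotonicity estimate has the ``wrong'' sign and competes with the $1/\Delta t$ coming from the time-derivative term. Securing strict monotonicity (and hence both uniqueness and the Brouwer-type surjectivity) therefore requires either a smallness assumption of the form $\Delta t\,L_\beta<1$ or a more careful splitting of the $\beta$-term; ensuring that this constraint is compatible with the refined estimates used in the convergence analysis is the most delicate point of the proof.
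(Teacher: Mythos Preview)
Your existence argument via coercivity and Brouwer is essentially the paper's, and the treatment of the $\beta$-term under a CFL-type constraint $\Delta t\,L_\beta<1$ is fine. The gap is in the uniqueness step: you assert that ``the upwind convection contribution is nonnegative because $f$ is nondecreasing and $\mathbf v$ is divergence-free'', but this is \emph{false} at the level of differences. What holds (and what the paper uses for coercivity, via the function $\Phi(r)=\int_0^r f'(s)s\,ds$) is that
\[
\sum_{K}\sum_{\sigma=K|L}m_\sigma(\vksnp)^-\bigl(f(w_K)-f(w_L)\bigr)w_K\ge 0.
\]
This does \emph{not} imply monotonicity of the convection operator. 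A simple discrete counterexample: take four cells $K_1,\dots,K_4$ with a unit cyclic flux $K_1\!\to\!K_2\!\to\!K_3\!\to\!K_4\!\to\!K_1$ (which is discretely divergence-free), $f(r)=r^+$, $U=(1,-1,0,0)$, $V=(0,-3,0,0)$. Then the convection difference tested against $U-V$ equals $-1<0$. Hence $\Phi_\omega$ is \emph{not} strongly monotone, and your uniqueness proof collapses; so does your measurability argument, which relies on strong monotonicity to get a Lipschitz inverse.

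The paper avoids this obstruction by a genuinely different uniqueness proof: an $L^1$-type estimate with an exponential weight $\varphi(x)=e^{-\eta|x|}$, following \cite[Proposition~26.1]{gal}. One takes absolute values in the difference of the two schemes, multiplies by the cell averages $\varphi_{K_i}$, and uses that the convection and diffusion cross-terms are controlled by $|\varphi_{K_i}-\varphi_{K_j}|$, which can be made small by choosing $\eta$ small (at fixed mesh). This yields $a_i>b_i$ in an inequality $\sum_i a_i|w_{K_i}-z_{K_i}|\le\sum_i b_i|w_{K_i}-z_{K_i}|$ and hence uniqueness. Measurability is then obtained not from strong monotonicity but by showing that the (nonlinear) solution map $\mathbf Q^{-1}$ is continuous via a Bolzano--Weierstra\ss\ argument combined with the already established uniqueness. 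If you want to repair your approach, you could alternatively bound the convection term by Young's inequality and absorb the gradient part into the diffusion; this works but forces a mesh-dependent smallness condition on $\Delta t$, which is weaker than the paper's result.
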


\begin{proof}
We fix $n\in \{0,\ldots, N-1\}$ and choose an arbitrary vector $u_h^n=(u^n_K)_{K\in \Tau}\in \mathbb{R}^{d_h}$. Firstly, we will show that there exists at least one random vector $u_h^{n+1}=(u^{n+1}_K)_{K\in \Tau}\in \mathbb{R}^{d_h}$ such that \eqref{equationapproxter} holds true  $\mathds{P}$-a.s in $\Omega$. To this end, we define the mapping $\mathbf{P}^n:\mathbb{R}^{d_h}\rightarrow \mathbb{R}^{d_h}$, $\mathbf{P}^n=(P_1^n,\ldots,P_{d_h}^n)$ such that for any $i\in \{1,\ldots, d_h\}$
\begin{align*}
P_i^n(w_{K_1},\ldots,w_{K_{d_h}})
=\ & \frac{m_{K_i}}{\Delta t} w_{K_i} 
- m_{K_i}\beta(w_{K_i}) \\
&+\sum_{\sigma=K_i|K_j \in \edgeskiint} m_{\sigma}(v_{K_i,\sigma}^{n+1})^{-}(f(w_{K_i})-f(w_{K_j}))\\
&+\sum_{\sigma=K_i|K_j \in \edgeskiint} \frac{m_{\sigma}}{d_{K_i|K_j}}(w_{K_i}-w_{K_j})- \frac{m_{K_i}}{\Delta t} \xi_i^n 
\end{align*}
where $\xi_i^n:=u_{K_i}^n+g(u_{K_i}^n)(W^{n+1}-W^n)$. 

Obviously, $\mathbf{P}^n$ is a continuous mapping. Next, we show that there exists $\varrho>0$ such that for all $w_h=(w_{K_i})_{1\le i\le d_h}\in\mathbb{R}^{d_h}$ such that $|w_h|=\varrho$,
\begin{equation*}
(\mathbf{P}^n(w_h),w_h)_{\mathbb{R}^{d_h}}:=\sum_{i=1}^{d_h}P_i^n(w_h)w_{K_i}\geq 0.
\end{equation*}
In this case, from \cite[Lemma 4.3]{Lions} it follows that there exists at least one $\overline{w}_h\in\mathbb{R}^{d_h}$ such that $| \overline{w}_h|\leq\varrho$ and $\mathbf{P}^n(\overline{w}_h)=0$. We have
\begin{align*}
\sum_{i=1}^{d_h} P_i^n(w_h)w_{K_i}
&=\sum_{i=1}^{d_h}\frac{m_{K_i}}{\Delta t}w_{K_i}^2
-\sum_{i=1}^{d_h}m_{K_i}\beta(w_{K_i})w_{K_i}
-\sum_{i=1}^{d_h}\frac{m_{K_i}}{\Delta t}\xi_i^n w_{K_i} \\
&\quad+\sum_{i=1}^{d_h}\sum_{\sigma=K_i|K_j \in \edgeskiint} m_{\sigma}(v_{K_i,\sigma}^{n+1})^{-}
(f(w_{K_i})-f(w_{K_j}))w_{K_i}
\\
&\quad+\sum_{i=1}^{d_h}\sum_{\sigma=K_i|K_j \in \edgeskiint}\frac{m_{\sigma}}{d_{K_i|K_j}}(w_{K_i}-w_{K_j})w_{K_i}\\
&=:I_1+I_2+I_3+I_4+I_5.
\end{align*}
Since $\beta$ is Lipschitz continuous, the term $I_2$ satisfies
\begin{equation}
    \label{term_I2}
    I_2 \geq - L_\beta \|w_h\|^2_{L^2(\Lambda)}.
\end{equation}
Moreover, by discrete partial integration,
\begin{align}\label{term_I5}
I_5=\sum_{\sigma=K_i|K_j \in \edgeskiint}\frac{m_{\sigma}}{d_{K_i|K_j}}|w_{K_i}-w_{K_j}|^2\geq 0.
\end{align}
Now, we focus on the term $I_4$. 
Since $f$ is Lipschitz continuous and nondecreasing, thanks to \cite[Lemma 18.5]{gal}, for any $r\in\mathbb{R}$, using the notation $\Phi(r)=\int_0^r f'(s)s\,ds$, for any $a,b\in\mathbb{R}$, one has
\begin{align*}
b(f(b)-f(a)) &= \int_a^b (sf(s))' ds - (b-a)f(a)\\
&= \int_a^b \Phi'(s) ds + \int_a^b (f(s)-f(a)) ds \\
&\geq (\Phi(b)-\Phi(a))+\frac1{2L_f} (f(b)-f(a))^2.
\end{align*}
Thus, since $\diver \ve=0$ in $[0,T]\times\Lambda$ and $v_{K_i,\sigma}^{n+1}=-v_{K_j,\sigma}^{n+1}$, we obtain
\begin{equation}\label{term_I4}
\begin{aligned}
I_4 &\geq \sum_{i=1}^{d_h} \sum_{\sigma=K_i|K_j \in \edgeskiint} m_{\sigma} (v_{K_i,\sigma}^{n+1})^{-}(\Phi(w_{K_i})-\Phi(w_{K_j})) \\
&=\sum_{i=1}^{d_h} \Phi(w_{K_i}) \sum_{\sigma=K_i|K_j \in \edgeskiint} m_{\sigma}(v_{K_i,\sigma}^{n+1})
=0.
\end{aligned}
\end{equation}
For the term $I_3$, since $-ab\ge-\frac{1}{2}(a^2+b^2)$ one has
\begin{align}\label{term_I3}
I_3
\geq -\frac1{2\Delta t} \left(\|w_h\|^2_{L^2(\Lambda)}+\|\xi_h^n\|^2_{L^2(\Lambda)}\right).
\end{align}
From \eqref{term_I2}, \eqref{term_I5}, \eqref{term_I4} and \eqref{term_I3} for some $\alpha>0$, choosing $\Delta t \leq \frac1{2 (\alpha+L_\beta)}$ we now get
\begin{align}\label{230227_01}
\begin{aligned}
\sum_{i=1}^{d_h} P_i^n(w_h)w_{K_i} 
&\geq \frac1{2\Delta t} \|w_h\|^2_{L^2(\Lambda)} 
-L_\beta\|w_h\|^2_{L^2(\Lambda)}
-\frac1{2\Delta t} \|\xi_h^n\|^2_{L^2(\Lambda)} \\
&\geq \alpha(\min_{K\in\Tau} m_K) |w_h|^2 -\frac1{2\Delta t} \|\xi_h^n\|^2_{L^2(\Lambda)}.
\end{aligned}
\end{align}
Then, setting
 \[\varrho:=\sqrt{\frac{1}{2\alpha(\min_{K\in\Tau} m_K)\Delta t}}\|\xi_h^n\|_{L^2(\Lambda)}>0\]
we get $(\mathbf{P}^n(w_h),w_h)_{\mathbb{R}^{d_h}}\geq 0$ from \eqref{230227_01} for all $w_h\in\mathbb{R}^{d_h}$ such that $|w_h|=\varrho$. Hence, there exists at least one element $\overline w_h$ such that $\mathbf{P}^n(\overline{w}_h)=0$. Thus, $u_h^{n+1}:=\overline w_h\in\mathbb{R}^{d_h}$ is solution to the numerical scheme~\eqref{equationapproxter}.

Next, we will prove the uniqueness of the solution. Therefore, we assume that there exist $w_h=(w_{K_i})_{1\le i \le d_h}\in\R^{d_h}$ and $z_h=(z_{K_i})_{1\le i \le d_h}\in\R^{d_h}$ satisfying $\mathbf{P}^n({w}_h)=\mathbf{P}^n({z}_h)=0$. Taking $P_i^n(w_h)-P_i^n(z_h)$, and using the initial formulation of the scheme \eqref{equationapprox}, for any $i=1,\dots,d_h$ we obtain
\begin{align*}
&\frac{m_{K_i}}{\Delta t}(w_{K_i}-z_{K_i})-m_{K_i}(\beta(w_{K_i})-\beta(z_{K_i}))\\
&+\sum_{\sigma=K_i|K_j \in \edgeskiint}m_{\sigma}v_{K_i,\sigma}^{n+1}(f(w_{\sigma})-f(z_{\sigma}))\\
&+\sum_{\sigma=K_i|K_j \in \edgeskiint}\frac{m_{\sigma}}{d_{K_i|K_j}}\left((w_{K_i}-w_{K_j})-(z_{K_i}-z_{K_j})\right)\\
&=0,
\end{align*}
where $w_\sigma$ and $z_\sigma$ are the upstream value with respect to $\sigma$.

Now, we adjust the method developed in the proof of \cite[Proposition~26.1]{gal}: Using the monotonicity of $f$, the fact that $v_{K_i,\sigma}^{n+1}=(v_{K_i,\sigma}^{n+1})^+ - (v_{K_i,\sigma}^{n+1})^- $ and taking the absolute value, one has
\begin{align}\label{230227_04}
\begin{aligned}
&\frac{m_{K_i}}{\Delta t}|w_{K_i}-z_{K_i}|
+\sum_{\sigma=K_i|K_j \in \edgeskiint} \frac{m_{\sigma}}{d_{K_i|K_j}}|w_{K_i}-z_{K_i}| \\
&+\sum_{\sigma=K_i|K_j \in \edgeskiint} m_{\sigma} (v_{K_i,\sigma}^{n+1})^+ |f(w_{K_i})-f(z_{K_i})|  \\
&\leq  m_{K_i}|\beta(w_{K_i})-\beta(z_{K_i})|
+ \sum_{\sigma=K_i|K_j \in \edgeskiint} \frac{m_{\sigma}}{d_{K_i|K_j}} |w_{K_j}-z_{K_j}| \\
&\quad+ \sum_{\sigma=K_i|K_j \in \edgeskiint} m_{\sigma} (v_{K_i,\sigma}^{n+1})^- |f(w_{K_j})-f(z_{K_j})|.
\end{aligned}
\end{align}
For $\eta>0$, $x\in\mathbb{R}^2$ we define $\varphi(x)=\exp(-\eta|x|)$ and for $K_i\in\Tau$, $i=1,\ldots,d_h$ let 
\[\varphi_{K_i}:=\frac{1}{m_{K_i}}\int_{K_i}\varphi(x)\,dx.\]
Multiplying \eqref{230227_04} by $\varphi_{K_i}$, taking the sum over $i=1,\ldots,d_h$ and rearranging the sums on the right-hand side by fixing $j$ and varying over $i$ we obtain
\begin{align}\label{230227_05}
\begin{aligned}
&\sum_{i=1}^{d_h}\frac{m_{K_i}}{\Delta t} \varphi_{K_i}  |w_{K_i}-z_{K_i}|
 +\sum_{i=1}^{d_h} \varphi_{K_i} \sum_{\sigma=K_i|K_j \in \edgeskiint} \frac{m_{\sigma}}{d_{K_i|K_j}} |w_{K_i}-z_{K_i}| \\
&+ \sum_{i=1}^{d_h} \varphi_{K_i} \sum_{\sigma=K_i|K_j \in \edgeskiint}  m_{\sigma}(v_{K_i,\sigma}^{n+1})^+ |f(w_{K_i})-f(z_{K_i})|\\
&\leq I_1 + I_2 + I_3
\end{aligned}
\end{align}
where
\begin{align}\label{230227_06}
\begin{aligned}
I_2
\leq& \sum_{i=1}^{d_h}|w_{K_i}-z_{K_i}| \sum_{\sigma=K_i|K_j \in \edgeskiint} \frac{m_{\sigma}}{d_{K_i|K_j}}| \varphi_{K_i}-\varphi_{K_j}| \\
&+ \sum_{i=1}^{d_h} |w_{K_i}-z_{K_i}| \sum_{\sigma=K_i|K_j \in \edgeskiint} \frac{m_{\sigma}}{d_{K_i|K_j}} \varphi_{K_i}
\end{aligned}
\end{align}
and similarly, since $(v_{K_j,\sigma}^{n+1})^{-}=(-v_{K_i,\sigma}^{n+1})^{-}=(v_{K_i,\sigma}^{n+1})^{+}$ for $\sigma=K_i|K_j$, using the Lipschitz continuity of $f$
\begin{align}\label{230227_07}
\begin{aligned}
I_3&\leq \sum_{i=1}^{d_h}L_f|w_{K_i}-z_{K_i}|\sum_{\sigma=K_i|K_j \in \edgeskiint}m_{\sigma}(v_{K_j,\sigma}^{n+1})^{-}|\varphi_{K_i}-\varphi_{K_j}|\\
&\quad+\sum_{i=1}^{d_h}|f(w_{K_i})-f(z_{K_i})|\sum_{\sigma=K_i|K_j \in \edgeskiint}m_{\sigma}(v_{K_i,\sigma}^{n+1})^{+}\varphi_{K_i}.
\end{aligned}
\end{align}
Now, plugging \eqref{230227_06} and \eqref{230227_07} into \eqref{230227_05} and using the Lipschitz continuity of $\beta$ we obtain for all $i=1,\ldots,d_h$
\begin{align}\label{230227_08}
\sum_{i=1}^{d_h}a_i|w_{K_i}-z_{K_i}|\leq \sum_{i=1}^{d_h}b_i|w_{K_i}-z_{K_i}|
\end{align}
with
\begin{align*}
a_i&:= m_{K_i} \left(\frac1{\Delta t} - L_\beta\right) \varphi_{K_i}\\
b_i&:=\sum_{\sigma=K_i|K_j \in \edgeskiint}\left(\frac{m_{\sigma}}{d_{K_i|K_j}}+m_{\sigma}(v_{K_j,\sigma}^{n+1})^{-}\right)|\varphi_{K_i}-\varphi_{K_j}|.
\end{align*}
Now, taking $\Delta t \leq \frac1{2L_\beta}$ using the same arguments as in the proof of \cite[Proposition 26.1]{gal}, we may choose $\eta>0$ small enough such that $a_i>b_i$ for all $i=1,\ldots d_h$. 
Thus $w_{K_i}=z_{K_i}$ then follows from \eqref{230227_08} for all $i=1,\ldots,d_h$. Since the initial vector $u_h^0$ is given, the existence of a unique solution $(u_h^n)_{1\leq n\leq N}\in\mathbb{R}^{d_h}$ follows by iteration. 

It is left to prove that $u_h^n$ is a $\mathcal{F}_{t_{n}}$-measurable random vector for all $n=1,\ldots,N$. 
We have already shown that for any given $\xi_h\in\mathbb{R}^{d_h}$ there exists a unique $w_h=(w_{K_i})_{1 \le i \le d_h}\in\mathbb{R}^{d_h}$ such that
$\mathbf{Q}(w_h)=\xi_h$ where $\mathbf{Q}:\mathbb{R}^{d_h}\rightarrow\mathbb{R}^{d_h}$, $\mathbf{Q}(w_h)=(Q_1,\ldots,Q_{d_h})(w_h)$ is defined by $Q_i(w_h)=P_i(w_h)-\frac{m_{K_i}}{\Delta t}\xi_i^n$
for all $i=1,\ldots,d_h$. Thus, $u_h^{n+1}=\mathbf{Q}^{-1}(\xi_h^n)$ $\mathds{P}$-a.s. in $\Omega$, where $\xi_h^n=(\xi_1^n,\ldots,\xi_{d_h}^n)$. Since $\mathbf{Q}^{-1}$ is continuous, if $\xi_h^n$ is $\mathcal{F}_{t_{n+1}}$-measurable the same holds true for $u_h^{n+1}$. 
Indeed, let $(\zeta^k)_k\subset \mathbb{R}^{d_h}$ be a sequence such that $\zeta^k\rightarrow \zeta$ for some $\zeta\in\mathbb{R}^{d_h}$ for $k\rightarrow\infty$. Then, for $w^k:=\mathbf{Q}^{-1}(\zeta^k)$ from \eqref{230227_01} and from the theorem of Bolzano-Weierstrass it follows that there exists $w\in\mathbb{R}^{d_h}$ such that, passing to an unlabelled subsequence if necessary, $w^k\rightarrow w$ for $k\rightarrow\infty$. This strong convergence is enough to pass to the limit in $\mathbf{Q}(w^k)$, and therefore $\mathbf{Q}(w)=\zeta$. Thanks to uniqueness, we get convergence of the whole sequence $(w^k)_k$, hence $\lim_{k\rightarrow\infty}\mathbf{Q}^{-1}(\zeta^k)=\mathbf{Q}^{-1}(\zeta)$ and $\mathbf{Q}^{-1}$ is continuous.
\end{proof}

\textbf{Acknowledgments} The authors would like to thank the German Research Foundation, the Institut de M\'ecanique et d'Ingenierie of Marseille, the Programmes for Project-Related Personal Exchange Procope and Procope Plus and the Procope Mobility Program DEU-22-0004 LG1 for financial support.


\begin{thebibliography}{6}

\bibitem{gal}
Eymard, R., Gallou\"{e}t, T., Herbin, R.: Finite Volume Methods. In: Lions, J. L., Ciarlet, P. Solution of Equation in $\mathbb{R}^n$ (Part 3), Techniques of Scientific Computing (Part 3), Handbook of Numerical Analysis, vol. 7, pp.730-1020, Elsevier (2000).

\bibitem{KryRoz81}
Krylov, N. V., Rozovskii, B. L.: Stochastic evolution equations. J. Sov. Math. 16(4), 1233-1277 (1981).

\bibitem{Lions}
Lions, J.-L.: Quelques M\'{e}thodes de R\'{e}solution des Probl\`{e}mes aux Limites Non Lin\'{e}aires. Dunod, Paris (2002).


\end{thebibliography}
\end{document}